\newtheorem{theorem}{Theorem}[section]
\newtheorem*{theorem*}{Theorem}
\newtheorem{lemma}[theorem]{Lemma}
\newtheorem*{lemma*}{Lemma}
\newtheorem{corollary}[theorem]{Corollary}
\newtheorem*{corollary*}{Corollary}
\newtheorem*{proposition*}{Proposition}
\theoremstyle{definition} 
\newtheorem{definition}[theorem]{Definition}
\newtheorem*{definition*}{Definition}
\newtheorem*{final_remark}{Final remarks}
\theoremstyle{remark}
\newtheorem{remark}[theorem]{Remark}
\newtheorem*{remark*}{Remark}
\newtheorem{example}[theorem]{Example}
\newtheorem*{example*}{Example}
\newtheorem*{question*}{Question}
\newcommand{\N}{\mathbb N}
\newcommand{\norm}[1]{\left\Vert#1\right\Vert}
\newcommand{\abs}[1]{\left\vert#1\right\vert}
\newcommand{\eps}{\varepsilon}
\newcommand{\ind}{\mathbbm{1}}
\renewcommand{\tilde}{\widetilde}
\newcommand{\dist}{\mathrm{dist}}
\newcommand{\F}{\mathcal{F}}
\newcommand{\G}{{\mathcal G}}
\renewcommand{\epsilon}{\varepsilon}
\renewcommand{\leq}{\leqslant}
\renewcommand{\geq}{\geqslant}
\newcommand{\Ex}[1]{\Upsilon_{#1}}
\newcommand{\mseq}[1]{m_{{#1}}}
\DeclareMathOperator{\Id}{Id}
\begin{document}

\title{Exploding Markov operators}
\thanks{Research is supported from resources for science in years 2013-2018 as research project (NCN grant 2013/08/A/ST1/00275, Poland)}


\author{Bartosz Frej}

\address{Faculty of Pure and Applied Mathematics,\\
Wroc{\l}aw University of Science and Technology,\\ 
Wybrze\.{z}e Wyspia\'{n}skiego 27,\\
50-370 Wroc{\l}aw, Poland \\
							\newline
\includegraphics[width=11pt]{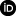}\ %
https://orcid.org/0000-0002-6084-7292
}
\email{Bartosz.Frej@pwr.edu.pl}  
\subjclass[2010]{28D05,  28D20, 37A30, 47A35}
\keywords{Markov operator \and doubly stochastic operator \and measure-preserving transformation \and entropy \and ergodicity \and disintegration of measure}

\maketitle

\begin{abstract}
A special class of doubly stochastic (Markov) operators is constructed. These operators come from measure preserving transformations and inherit some of their properties, namely ergodicity and positivity of entropy, yet they may have no pointwise factors.
\end{abstract}

\section{Introduction}
The subject of the current paper lies in the border zone between ergodic theory and operator theory. The main motivation of study was the desire to increase the number of examples of doubly stochastic operators, which escape the scope of classical ergodic theory (because they are not induced by measure preserving maps as their Koopman operators), but they still reveal a nontrivial dynamical behavior. By a \emph{doubly stochastic} or a \emph{Markov operator} we understand an operator $P:L^p(\mu)\to L^p(\nu)$, where $(X,\mu)$ and $(Y,\nu)$ are probability spaces, which fulfills the following conditions:
\begin{itemize}
	\item[(i)] $Pf$ is positive for every positive $f \in L^p(\mu)$,
	\item[(ii)] $P\ind=\ind$ (where $\ind$ is the function constantly equal to 1),
	\item[(iii)] $\int Pf\,d\nu=\int f\,d\mu$ for every $f \in L^p(\mu)$.
\end{itemize} 
For example, the well-studied class of quasi-compact doubly stochastic operators on $L^2$ lies pretty far from the theory of measure preserving maps. But the domain of a quasi-compact operator decomposes into the direct sum of two reducing subspaces, called reversible and almost weakly stable parts, respectively, such that the first one is finite dimensional, while on the other one orbits of functions converge to zero in $L^2$ norm. The restriction of such an operator to the reversible part is Markov isomorphic to a rotation of a compact abelian group (which is finite in this case). The transition probability associated to the operator forces points of the underlying space to ramble periodically through finitely many sets of states (in a fixed order), randomly choosing the succeeding state from a set which is next in the queue. These operators are null, meaning that their sequence entropy is always zero (see \cite{FH} for details). As another example one may think of a convex combination of finitely many measure preserving maps, which leads to studying a rich class of iterated function systems. Unfortunately, such operators are hard to handle by the entropy theory as defined in \cite{DF}---e.g., it is possible that the combination of maps with positive entropy has entropy equal to zero. In the current paper another class of examples which stem from pointwise maps is proposed and some of their properties are investigated.

\section{The definition}
Let $(X,\Sigma,\mu)$ be a standard probability space and let $T:X \to X$ be a measure preserving surjection. Let $(a_n)$ be a strictly decreasing sequence, such that $\sum_{n=1}^\infty a_n =1$. Define a probability distribution $m=\mseq{(a_n)}$ on $\N$  by $m(\{k\}) = a_k$. Now let $Y$ be the Cartesian product $X\times\N$ with the product $\sigma$-algebra and let $\nu=\mu\times m$. For a fixed $k\in\N$ let $\xi_k$ be a partition of $X$ into sets $T^{-k}\{T^{k}x\}$, $x\in X$, and denote by $\xi_k(x)$ an element of the partition $\xi_k$ which contains $x$. 
For every $k$ this partition is measurable (see Appendix 1 in~\cite{FKS} for the precise definition), in other words, the quotient space $X/\xi_k$ is countably separated, so  $X/\xi_k$ is also a standard probability space with measure transported by the map $x\mapsto\xi_k(x)$. Indeed, let $\{B_1,B_2,...\}$ be a separating collection in $X$. If $\xi_k(x)$ and $\xi_k(y)$ are disjoint then $T^{k}x\not=T^{k}y$. Without loss of generality one may assume that $T^{k}x\in B_i$ and $T^{k}y\not\in B_i$ for some $i\in\N$. Then, $\xi_k(z) \subset T^{-k}B_i$ if $T^{k}z\in B_i$ and $\xi_k(z) \cap T^{-k}B_i=\emptyset$ otherwise, so the collection $\{T^{-k}B_i:i\in\N\}$ separates elements of $\xi_k$. Since $T$ is onto, every point in $B_i$ can be represented as $T^{k}z$ for some $z\in X$, so each $T^{-k}B_i=\bigcup_{z\in T^{-k}B_i} \xi_k(z)$ and the collection $\{T^{-k}B_1, T^{-k}B_2,...\}$ separates elements of the partition $\xi_k$. 

Let $\{\mu_C:C\in\xi_k\}$ be the disintegration of $\mu$ over $X/\xi_k$, that is, there is a map $C\mapsto \mu_C$ defined on $X/\xi_k$ with range in the space of all probability measures on $X$, such that each measure $\mu_C$ satisfies $\mu_C(X \setminus C)=0$ and there is a measure $\hat\mu_k$ on $X/\xi_k$ with the property that for any measurable function $f\in L^1(\mu)$,
\[
\int f d\mu = \int_{X/\xi_k} \left(\int_C f(x) d\mu_C(x)\right) d\hat\mu_k(C)
\]
(see \cite{G}). In addition, the map $C\mapsto \mu_C$ is measurable, when the space of probability measures is endowed with the Borel $\sigma$-algebra for the weak${}^*$ topology in the space of probability measures on $X$.
An operator $E_k:L^1(X,\mu)\to L^1(X,\mu)$ given by the formula $E_kf(x) = \int f\circ T \,d\mu_{\xi_k(x)}$ is doubly stochastic. Indeed, it is clear that it is positive and preserves constant functions. Moreover, the function $x\mapsto\mu_{\xi_k(x)}$ is constant on atoms of $\xi_k$, so $E_kf(x)=\int_C f\circ T d\mu_C$ for $x\in C$. Therefore, for every $f\in L^1(X,\mu)$ it holds that
\begin{eqnarray*}
\int E_kf d\mu &=& \int_{X/\xi_k} \left(\int_C E_kf d\mu_C\right) d\hat\mu_k(C) \\
	&=& \int_{X/\xi_k} \left(\int_C f\circ T d\mu_C\right) d\hat\mu_k(C) = \int f\circ Td\mu = \int f d\mu.
\end{eqnarray*}
Define a sequence $(b_k)$ of positive numbers by
\[
b_k=\frac{a_k-a_{k+1}}{a_1}.
\]
As the simplest example one may consider $a_k=b_k=\frac1{2^k}$ or, more generally, $a_k=b_k=(1-a)a^{k-1}$ ($0<a<1$), but other choices are also possible (though if $a_k=b_k$ then one automatically obtains a geometric sequence). Let $\delta_y$ be the Dirac measure concentrated in $y$, that is, $\delta_y(A)=\ind_A(y)$, and let sections of a set $A\subset Y$ and of a function $f$ on $Y$ be denoted by
\newcommand{\sect}[2]{{#1|}_{#2}}
\[
\sect A k =\{x\in X: (x,k)\in A\} \qquad \textrm{and} \qquad \sect f k (x) = f(x,k).
\]

\begin{definition}	\label{def}
An \emph{exploding operator induced by $T$} is a Markov integral operator  on $L^1(Y,\nu)$ defined by
\begin{equation}	\label{explod}
\Ex{T}f (y) = \int f(u) P_T(y,du),
\end{equation}
where the probability kernel $P_T$ is given by:
\begin{eqnarray*}
P_T\big((x,1),A\big) &=& \sum_{k=1}^\infty b_k \mu_{\xi_k(x)}(T^{-1}\sect Ak)\\
P_T\big((x,k),A\big) &=& \delta_{(Tx,k-1)}(A) \qquad \mathrm{for\ } k\geq 2
\end{eqnarray*}
In other words, 
\begin{eqnarray*}
\Ex{T}f(x,1) &=& \sum_{k=1}^\infty b_k \int \sect fk\circ T \, d\mu_{\xi_k(x)} = \sum_{k=1}^\infty b_kE_k\sect fk(x)\\
\Ex{T}f(x,k) &=& f(Tx, k-1) \qquad \mathrm{for\ } k\geq 2
\end{eqnarray*}
\end{definition}

One may visualize the action of $\Ex{T}$ via the transition probablilty $P_T$ in the following way. Each point of $Y$ is a pair consisting of some $x\in X$ and a positive integer $k$. The integer coordinate represents the indication of a clock, which counts down time to an explosion. As long as this indication is greater than 1, the point is mapped according to the action of the pointwise transformation $T$ and the counter goes down by one. When the counter is to be reduced from 1 to 0, the point $x$ explodes and its images are spread over the space (more precisely, over the set of points which would share the common future with $x$, if one considered the evolution by $T$) with counters reset to $k$ with probability~$b_k$.

This class of operators is a generalization of the following example described in \cite{DF}. It has positive entropy, yet it is strictly non-pointwise, meaning that the only pointwise factor of it is the trivial one (see section \ref{pointwise} for definitions).
\begin{example}	\label{example}
Let $(X,\Sigma,\mu)$ consist of the set $X = \{0,1\}^{\N}$ of one-sided 0-1 sequences  with the product $\sigma$-algebra and with the uniform product measure 
$\mu = (\frac 12, \frac 12)^{\N}$. Let $m$ be the
geometric distribution on natural numbers $\N$ given by $m(\{k\}) = 2^{-k}$.

For each finite block $B = (B_1,B_2,\dots,B_k)\in \{0,1\}^k$ let us define the map $\sigma_B:X\to X$ by 
\[
(\sigma_Bx)_n = \left\{\begin{array}{ll}
				 B_n & \mathrm{for}\ \ n\le k\\
				x_{n+1} & \mathrm{for}\ \ n > k
			\end{array}\right..
\]
and define the operator $P$ on $L^1(\mu\times\nu)$  as follows:
\begin{eqnarray*}
Pf(x,k) & = & f(\sigma x, k-1) \ \ \ \mathrm{if} \ k>1,\\
Pf(x,1) & = & \sum_{k=1}^\infty 2^{-k}\sum_{B\in\{0,1\}^k} 2^{-k} f(\sigma_Bx,k).
\end{eqnarray*}
\end{example}

\noindent This example is an instance of our construction if $T$ is the $(\frac12,\frac12)$-Bernoulli shift. We will restore some of its features in the more general case. 

Let us first prove that $\Ex{T}(f)$ defined by \eqref{explod} is Markovian. Clearly, $\Ex{T}f \geq 0$ for $f\geq 0$ and $\Ex{T}$ preserves constants, so the only thing left to show is the fact that $\Ex{T}$ preserves measure. This is checked in the following calculation.
\begin{multline*}
\int_Y \Ex{T}f d\nu = \int_{X\times\{1\}} \Ex{T}f d\nu + \sum_{k=2}^\infty \int_{X\times\{k\}} \Ex{T}f d\nu\\
= a_1 \int_X \sect{(\Ex{T} f)}1 d\mu + \sum_{k=2}^\infty a_k \int_X \sect{(\Ex{T} f)}k d\mu\\
= \sum_{k=1}^\infty a_1 b_k \int_X E_k \sect fk d\mu + \sum_{k=2}^\infty a_k \int_X \sect f{k-1}\circ T \, d\mu\\
= \sum_{k=1}^\infty a_1 b_k \int_X \sect fk d\mu + \sum_{k=1}^\infty a_{k+1} \int_X \sect fk\circ T \, d\mu \\
= \sum_{k=1}^\infty (\underbrace{a_1b_k + a_{k+1}}_{a_k}) \int_X \sect fk d\mu = \int_Y f d\nu
\end{multline*}

\begin{remark}
The above construction is particularly simple for invertible (injective) maps. In this case, $\Ex{T}$ is closely related to the Koopman operator of $T$, precisely, $\Ex{T}f(x,1) = \sum_{k=1}^\infty b_k f(Tx,k)$ or, in other words, $P_T$ sends the point $(x,1)$ to $(Tx,k)$ with probability $b_k$. 
\end{remark}


\section{Pointwise factors}	\label{pointwise}

We begin with several definitions which can be found in \cite{EFHN}.
\begin{definition}
A \emph{unital sublattice} of $L^1(X,\mu)$ is a closed linear subspace of $L^1(X,\mu)$ which contains the constant~$\ind$ and together with each $f$ it contains its conjugate $\bar f$ and its absolute value $|f|$.
\end{definition}
\begin{definition}[\cite{EFHN}, Definition 12.9]	
Let $(X_1,\mu_1)$, $(X_2,\mu_2)$ be a probability space and let $U:L^1(\mu_2)\to L^1(\mu_1)$ be a Markov operator.
\begin{enumerate}
	\item $U$ is a \emph{Markov embedding} if it is a lattice homomorphism (i.e., $|Uf|=U|f|$ for every $f\in L^1(\mu_2)$) or, equvalently, there is a Markov operator $S$ such that $SU$ is an identity.
	\item $U$ is a \emph{Markov isomorphism} if it is a surjective Markov embedding.
\end{enumerate}
\end{definition}
\begin{definition}[\cite{EFHN}, Definition 13.26]	\label{factor}
A \emph{factor} of a doubly stochastic operator $P$ on $L^1(X,\mu)$ is a unital sublattice of $L^1(X,\mu)$, which is invariant under the action of $P$.
\end{definition}
This definition identifies factors as certain subspaces of the domain. It can be shown these subspaces have form $L^1(X,\Sigma_F,\mu)$, where $\Sigma_F$ is a sub-$\sigma$-algebra of the $\sigma$-algebra of all measurable subsets of $X$, which is invariant in this sense that $P\ind_A$ is $\Sigma_F$-measurable for each $A\in\Sigma_F$. Moreover, the representation is unique if one assumes that $\Sigma_F$ is complete with respect to the measure~$\mu$.
Motivated by the theory of classical dynamical systems one may say that if $P_1$ is a doubly stochastic operator on $L^1(\mu_1)$ and $P_2$ is a doubly stochastic operator on $L^1(\mu_2)$ then $P_2$ is a factor of $P_1$ if there is a Markov embedding $U:L^1(\mu_2)\to L^1(\mu_1)$ such that $UP_2 = P_1U$. This definition is consistent with Definition \ref{factor}---the appropriate sublattice is obtained as the image $U(L^1(\mu_2))$.

In case of standard Borel spaces an operator $P_2$ is a factor of $P_1$ if and only if there is a measure-preserving surjection $\pi:X_1\to X_2$ satisfying, for every $f\in L^1(\mu_2)$, the condition $(P_2f)\circ\pi = P_1(f\circ\pi)$. Indeed, a Markov embedding sends characteristic functions of sets to characteristic functions of other sets, hence it defines a homomorphism between measure algebras. In case of standard Borel spaces such homomorphism $\Pi$ is always induced by a pointwise measure preserving map $\pi:X_1\to X_2$ by the formula $\Pi=\pi^{-1}$, so the general definition boils down to the pointwise one.
 Furthermore, if a measure preserving map $T_2$ is a factor of a map $T_1$ (in a classical sense) then the Koopman operator of~$T_2$ is a factor of the Koopman operator of~$T_1$.

A factor of an operator~$P$ is \emph{pointwise} if it is a Koopman operator of a measure preserving map.
This may be understood in two equivalent ways: either there is a sublattice of the form $L^1(X,\Sigma_F,\mu)$, where $\Sigma_F$ is a complete $\sigma$-algebra, which satisfies
\[
A\in\Sigma_F \Rightarrow \exists B\in\Sigma_F\ P\ind_A=\ind_B,
\]
or there is a dynamical system $(Z,\lambda,S)$, where $S:Z\to Z$ is a measure preserving map, and a Markov embedding $U:L^1(\lambda)\to L^1(
\mu)$ with $PUf=U(f\circ S)$ for every $f\in L^1(\lambda)$.

A factor is \emph{trivial} if the measure is concentrated in one point. Obviously, such factors are pointwise and the factor of $P$ is trivial if and only if it can be represented as a subspace $L^1(X,\Sigma',\mu)$, where $\Sigma'$ consists solely of sets of measure zero or one.

Given a map $T$ we define the following relation on $X$:
\begin{equation*}	
x\sim x' \ \Leftrightarrow\ \exists n\in \N\ \ T^n(x)=T^n(x')
\end{equation*}
Equivalently, one can replace the condition above by  $\xi_n(x)=\xi_n(x')$ for some~$n$ or by
\begin{equation}	\label{rel}
\exists i,j\in\N\ \exists z\in X\ \ x\in \xi_i(z) \textrm{\ \ and\ \ } x'\in \xi_j(z).
\end{equation}
Cleary, this is an equivalence relation so it decomposes the space $X$ into disjoint equivalence classes. The equivalence class of $x\in X$ will be denoted by $[x]$ and the corresponding quotient space by $\widetilde{X}$. All such classes are both measurable in $X$ and $\xi_k$-measurable for each $k$.

Consider the space $(\widetilde{X},\widetilde{\Sigma},\widetilde{\mu})$, where $\widetilde{\Sigma}$ and $\widetilde{\mu}$ are the $\sigma$-algebra and the measure transported from $(X,\mu)$ by the canonical map $x\mapsto[x]$.
There is a natural pointwise action $\tilde T$ on $(\widetilde{X},\widetilde{\Sigma},\widetilde{\mu})$ given by $\tilde T[x] = [Tx]$. It is obvious that this is a factor of $T$, but by a straightforward calculation one verifies that it also is a factor of $\Ex{T}$.

\begin{lemma}
Let $L^1(Y,\Sigma_F,\mu)$ be a pointwise factor of $\Ex{T}$. Then for every $A\in\Sigma_F$ the function $\sect{\Ex{T}\ind_A}1$ is a characteristic function of a set being a union of some equivalence classes of $\sim$.
\end{lemma}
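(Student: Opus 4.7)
The plan is to translate the pointwise-factor condition into an equation that rigidly constrains the functions $s_k(x) := \mu_{\xi_k(x)}(T^{-1}(A|_k))$, and then to recognise these as conditional expectations.

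First, by hypothesis $L^1(Y,\Sigma_F,\nu)$ is a pointwise factor, so $\Upsilon_T\ind_A = \ind_B$ $\nu$-a.e.\ for some $B\in\Sigma_F$. Passing to sections (justified by Fubini), $(\Upsilon_T\ind_A)|_1 = \ind_C$ $\mu$-a.e., where $C := B|_1$; our task is to show that $C$ equals, mod $\mu$-null sets, a union of $\sim$-equivalence classes. Unpacking the definition of $\Upsilon_T$ on the first layer gives
\[
\sum_{k=1}^\infty b_k\, s_k(x) \;=\; \ind_C(x)\qquad\text{for }\mu\text{-a.e. }x,
\]
with $s_k(x)\in[0,1]$.

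Next I would exploit the simple but crucial observation that $\sum_k b_k = 1$ (telescoping, since $b_k=(a_k-a_{k+1})/a_1$) together with $b_k>0$. Consequently a convex combination $\sum_k b_k s_k(x)$ can equal $0$ only when every $s_k(x)=0$, and can equal $1$ only when every $s_k(x)=1$. Fixing a common $\mu$-conull set on which this holds for all $k$ simultaneously (countable intersection), I conclude
\[
s_k(x) \;=\; \ind_C(x)\qquad \text{$\mu$-a.e., for every }k\in\N.
\]

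Now I would recognise $s_k$ as a conditional expectation. By the disintegration formula, $s_k(x) = \mu_{\xi_k(x)}(T^{-1}(A|_k)) = E(\ind_{T^{-1}(A|_k)}\mid \xi_k)(x)$. Since this conditional expectation is $\{0,1\}$-valued $\mu$-a.e., the set $T^{-1}(A|_k)$ must itself be $\xi_k$-measurable mod $\mu$-null sets and must coincide with $C$; in particular $C$ is $\xi_k$-measurable mod null sets for every~$k$.

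Finally, I would appeal to the description of $\sim$ from the text: since $\xi_n(x)\subseteq\xi_{n+1}(x)$, the equivalence class of $x$ satisfies $[x]=\bigcup_n\xi_n(x)$. A set which is $\xi_k$-measurable for every $k$ must, for $\mu$-a.e.\ $x$ in it, contain $\xi_k(x)$ for every $k$, hence contain $[x]$. Thus $C$ agrees mod $\mu$-null sets with a union of equivalence classes of $\sim$, and $(\Upsilon_T\ind_A)|_1=\ind_C$ is the characteristic function of such a union.

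The proof is conceptually very short; the only real subtlety is bookkeeping with mod-null-set statements — arranging the countable family of $\mu$-a.e.\ equalities $s_k(x)=\ind_C(x)$ on one common conull set, and invoking Fubini to cross between $\mu$ on $X$ and $\nu$ on $Y$. The substantive step is the convex-combination rigidity provided by $b_k>0$ and $\sum b_k=1$.
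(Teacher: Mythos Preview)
Your proof is correct and follows essentially the same line as the paper: both arguments use the convex-combination rigidity (since $b_k>0$ and $\sum_k b_k=1$, the sum can be $0$ or $1$ only if every $s_k(x)$ is) to force $s_k(x)=\ind_C(x)$ for all $k$, then combine this with the fact that $x\mapsto\mu_{\xi_k(x)}(\cdot)$ is constant on atoms of $\xi_k$ and that $[x]=\bigcup_k\xi_k(x)$. Your detour through the conditional-expectation interpretation and ``$\xi_k$-measurable for every $k$'' is a mild repackaging; the paper argues more directly that if $x\sim x'$ then $\xi_i(x)=\xi_i(x')$ for some $i$, whence $s_i(x)=s_i(x')$ and thus $\ind_C(x)=\ind_C(x')$.
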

\begin{proof}
If $A\in\Sigma_F$ then
\[
\sect{\Ex{T}\ind_A}1 = \sum_{k=1}^\infty b_k \mu_{\xi_k(x)}(T^{-1}\sect{A}k)=\sect{\ind_B}1
\]
for some set $B\in \Sigma_F$. Hence given $x$ one has $\mu_{\xi_k(x)}(\sect{T^{-1}A}k)=0$ for all $k$ or $\mu_{\xi_k(x)}(\sect{T^{-1}A}k)=1$ for all $k$. The value of $\mu_{\xi_k(x)}(A)$ is constant on each element of $\xi_k$. But if $x\sim x'$ then $x$ and $x'$ belong to the same atom of $\xi_i$ for some $i$, so for every $k$ the function $x\mapsto\mu_{\xi_k(x)}(A)$ is also constant on each equivalence class of $\sim$. Thus, $\sect B1$ is a union of equivalence classes on which $\mu_{\xi_k(x)}(A)=1$.\qed
\end{proof}

\begin{theorem}
The exploding operator $\Ex{T}$ has no non-trivial pointwise factor if and only if the $\sigma$-algebra $\widetilde{\Sigma}$  consists solely of sets of measure zero or one.
\end{theorem}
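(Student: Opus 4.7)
The direction from $\widetilde{\Sigma}$ non-trivial to the existence of a non-trivial pointwise factor is immediate from the paragraph preceding the theorem: $(\widetilde{X},\widetilde{\Sigma},\widetilde{\mu},\widetilde{T})$ is itself a pointwise factor of $\Ex{T}$, and it is non-trivial exactly when $\widetilde{\Sigma}$ is. All the work is therefore in the converse: assuming every set in $\widetilde{\Sigma}$ has $\widetilde{\mu}$-measure $0$ or $1$, I want to show that every pointwise factor of $\Ex{T}$ is trivial.

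My strategy is to fix a pointwise factor $L^1(Y,\Sigma_F,\nu)$, pick any $A\in\Sigma_F$, and derive $\nu(A)\in\{0,1\}$. Since the factor is pointwise, there is $B\in\Sigma_F$ with $\Ex{T}\ind_A=\ind_B$, and the preceding lemma identifies the section $\sect{B}1\subset X$ as a union of $\sim$-equivalence classes, that is, as a representative of an element of $\widetilde{\Sigma}$. By the standing triviality assumption, this immediately gives $\mu(\sect{B}1)\in\{0,1\}$.

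To convert this into information about $A$ itself, I integrate the identity $\sect{(\Ex{T}\ind_A)}1=\sum_{k}b_k E_k \sect{\ind_A}k$ over $\mu$. Using that each $E_k$ preserves $\mu$ (as was verified in the measure-preservation computation for $\Ex{T}$), this collapses to
\[
\mu(\sect{B}1) \;=\; \sum_{k=1}^{\infty}b_k\,\mu(\sect{A}k).
\]
Since $\sum_k b_k = 1$ by telescoping (the $a_k$ are positive and tend to $0$), the two allowed values $\mu(\sect{B}1)\in\{0,1\}$ force $\mu(\sect{A}k)$ to be, respectively, identically $0$ or identically $1$ in $k$, and hence $\nu(A)=\sum_k a_k\mu(\sect{A}k)\in\{0,1\}$.

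The only real subtlety is the displayed identity, which is exactly where the telescoping choice of weights $b_k=(a_k-a_{k+1})/a_1$ built into the definition of $\Ex{T}$ does its work; once this identity and the preceding lemma are in place, the remainder is a short convexity observation. I expect to spend most of the write-up carefully tracking sections and disintegrations to derive the displayed formula, and essentially nothing on the combinatorial finish.
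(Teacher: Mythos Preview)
Your argument is correct and follows the same overall route as the paper: invoke the preceding lemma to get $\mu(\sect{B}1)\in\{0,1\}$, then use the convex-combination structure $\sum_k b_k\,\mu(\sect{A}k)$ (with $\sum_k b_k=1$ and $b_k>0$) to force all $\mu(\sect{A}k)$ to a common value. Your version is in fact a little more streamlined: you integrate only the first section to obtain $\mu(\sect{B}1)=\sum_k b_k\,\mu(\sect{A}k)$ directly, whereas the paper first takes a detour through $\mu(\sect{A}1)$ and then, integrating over all of $Y$, arrives at the equivalent identity $\sum_k(a_k-a_{k+1})\mu(\sect{A}k)\in\{0,a_1\}$ before drawing the same conclusion.
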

\begin{proof}
If $\widetilde{\Sigma}$ contains a set with measure in $(0,1)$ then $(\widetilde{X},\widetilde{\Sigma},\widetilde{\mu},\tilde T)$ is a non-trivial pointwise factor. If not and if $L^1(Y,\Sigma_F,\mu)$ is a pointwise factor of $\Ex{T}$ then by the above lemma for every $A\in \Sigma_F$ it holds that $\sect{\Ex{T}\ind_{A}}1$ is either equal to zero $\mu$-a.e. or it is equal to one $\mu$-a.e. 
\begin{eqnarray*}
\mu(\sect A1)& = &\nu(\sect A1 \times\N) = \int\ind_{\sect A1 \times\N}d\nu = \int \Ex{T}\ind_{\sect A1 \times\N}d\nu \\
& = & \begin{cases} 
\sum_{k=2}^\infty a_k\mu(T^{-1}(\sect A1)) & \textrm{if }\sect{\Ex{T}\ind_{A}}1=0\ \textrm{$\mu$-a.e.}\\
a_1+\sum_{k=2}^\infty a_k\mu(T^{-1}(\sect A1)) & \textrm{if }\sect{\Ex{T}\ind_{A}}1=1\ \textrm{$\mu$-a.e.}
\end{cases},
\end{eqnarray*}
which is not equal to $\mu(\sect A1)$, unless $\mu(\sect A1)=0$ or $\mu(\sect A1)=1$. In the first case,
\begin{eqnarray*}
\sum_{k=1}^\infty a_k\mu(\sect Ak) & = & \nu(A) = \int \Ex{T}\ind_A d\nu = \int_{X\times\{2,3,...\}}\Ex{T}\ind_A d\nu\\
&=& \sum_{k=2}^\infty a_k\mu(T^{-1}(\sect A{k-1})) = \sum_{k=1}^\infty a_{k+1}\mu(\sect A{k}).
\end{eqnarray*}
Thus, $\sum_{k=1}^\infty (a_k-a_{k+1})\mu(\sect Ak)=0$, so the fact that the sequence $(a_k)$ is strictly decreasing implies that $\mu(\sect Ak)=0$ for all $k$. Performing similar calculations for the case $\mu(\sect A1)=1$, we obtain
\[
\sum_{k=1}^\infty b_k\mu(\sect A{k}) = \sum_{k=1}^\infty \frac{a_k-a_{k+1}}{a_1}\mu(\sect A{k})=1,
\]
which happens only if $\mu(\sect A{k})=1$ for all $k$. Hence, $\nu(A)=0$ or $\nu(A)=1$ for all $A\in\Sigma_F$. \qed
\end{proof}

\begin{corollary}
If $T$ is a one-sided (noninvertible) Bernoulli shift then $\Ex{T}$ is strictly non-pointwise.
\end{corollary}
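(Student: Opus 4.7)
By the theorem immediately preceding the corollary, it suffices to show that for $T$ a one-sided Bernoulli shift, the $\sigma$-algebra $\widetilde{\Sigma}$ consists only of sets of measure $0$ or $1$. Recall that $\widetilde{\Sigma}$, pulled back to $X$, consists of the measurable subsets of $X$ that are unions of equivalence classes of the relation $\sim$, and $x\sim x'$ iff $T^n x = T^n x'$ for some $n$.

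The plan is to identify this pulled-back $\sigma$-algebra with the tail $\sigma$-algebra of the Bernoulli process and then invoke Kolmogorov's zero-one law. For $T$ the shift on $\{0,1,\dots,r-1\}^{\N}$, the condition $T^n x = T^n x'$ is equivalent to $x_k = x'_k$ for all $k>n$, so $\sim$ is exactly the tail equivalence relation (eventual agreement of sequences), and the atoms of $\xi_n$ are the cylinders determined by coordinates $n+1,n+2,\dots$\,. Consequently the $\sigma$-algebra of $\xi_n$-saturated measurable sets is precisely $T^{-n}(\Sigma)$.

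First I would check that a measurable $A\subset X$ is a union of $\sim$-classes iff $A\in\bigcap_n T^{-n}(\Sigma)$, i.e.\ $A$ lies in the tail $\sigma$-algebra. The forward direction follows because a union of $\sim$-classes is $\xi_n$-saturated for every $n$. Conversely, if $A\in T^{-n}(\Sigma)$ for each $n$ and $x\sim x'$ with $T^n x=T^n x'$, then writing $A=T^{-n}(B)$ shows $x\in A\Leftrightarrow x'\in A$, so $A$ is $\sim$-saturated. Once this identification is in hand, Kolmogorov's zero-one law applied to the i.i.d.\ Bernoulli process gives that $\bigcap_n T^{-n}(\Sigma)$ is $\mu$-trivial; equivalently, the Bernoulli shift is exact. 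Hence $\widetilde{\Sigma}$ is trivial, and the preceding theorem yields that $\Ex{T}$ has no non-trivial pointwise factor.

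The only mild subtlety is the identification of ``union of $\sim$-classes'' with ``tail measurable'', which requires no more than the bookkeeping sketched above; the substantive input is Kolmogorov's zero-one law (equivalently, exactness of the one-sided Bernoulli shift), which is standard. Everything else is purely formal.
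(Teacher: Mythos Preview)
Your proof is correct and follows essentially the same route as the paper: identify the $\sim$-saturated measurable sets with (a subfamily of) the tail $\sigma$-algebra and invoke Kolmogorov's zero--one law. You supply more detail than the paper (in particular the converse inclusion, which is not actually needed for the corollary), but the argument is the same.
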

\begin{proof}
The equivalence relation identifies all points, which are sequences having the same tail. It is not hard to show that any set which belongs to the $\sigma$-algebra defined by such relation is a member of the tail $\sigma$-algebra, therefore it has measure zero or one, by the Kolmogorov's zero-one law.\qed
\end{proof}


\section{Ergodicity}

A doubly stochastic operator is \emph{ergodic} if constant functions are the only functions invariant under the action of the operator.

Let us start with the following operator-theoretic restatement of a classical equivalent definition of ergodicity. Though it is probably well known to the experts, we include the proof.

\begin{theorem}
A doubly stochastic operator $P$ is ergodic if and only if for any non-negative function $f$, which is not equal almost everywhere to zero, the sum $\sum_{k=1}^\infty P^kf$ is positive almost everywhere.
\end{theorem}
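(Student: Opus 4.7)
The plan is to use the equivalence between ergodicity of a Markov operator $P$ and triviality of $P$-invariant characteristic functions: $P\ind_A=\ind_A$ implies $\mu(A)\in\{0,1\}$. Both directions are then handled by constructing such an invariant set.

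For the forward direction, given $f\geq 0$ in $L^1$ with $f\not\equiv 0$, I would introduce the auxiliary function $g=\sum_{k=1}^{\infty}2^{-k}P^k f$, which lies in $L^1$ because the series converges in norm. The set $A:=\{g=0\}$ coincides with $\{\sum_{k\geq 1}P^k f=0\}$. A direct term-by-term rearrangement gives $Pg=2g-Pf$, hence $Pg\leq 2g$, so $Pg=0$ on $A$. To upgrade this to $P\ind_A=\ind_A$, I would approximate $\ind_{A^c}$ from below by $\varphi_n=\min(ng,\ind)$; positivity of $P$ together with $\varphi_n\leq ng$ yields $P\varphi_n\leq n Pg$, so $P\varphi_n=0$ on $A$, and monotone convergence yields $P\ind_{A^c}=0$ on $A$. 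Because $P\ind=\ind$, this forces $P\ind_A=1$ on $A$, and the mass balance $\int P\ind_A\,d\mu=\mu(A)=\int_A P\ind_A\,d\mu$ then gives $P\ind_A=0$ on $A^c$. Ergodicity now forces $\mu(A)\in\{0,1\}$, and $\mu(A)=1$ is ruled out because it would imply $Pf=0$ a.e., hence $\int f=\int Pf=0$.

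For the converse I would argue by contrapositive. Suppose $g$ is a non-constant $P$-invariant function. For every $c\in\R$ the chain $|g-c|=|P(g-c)|\leq P|g-c|$ combined with integral preservation forces equality $|g-c|=P|g-c|$ a.e., and therefore $(g-c)_+=\tfrac12(|g-c|+(g-c))$ is $P$-invariant as well. Approximating $\ind_{\{g>c\}}$ from below by $\psi_n=\min(n(g-c)_+,\ind)$, positivity of $P$ yields $P\psi_n\leq\psi_n$, and integral preservation promotes this to $P\psi_n=\psi_n$, so in the limit $P\ind_{\{g>c\}}=\ind_{\{g>c\}}$. Since $g$ is non-constant, I can pick $c$ with $B:=\{g>c\}$ of measure strictly between $0$ and $1$; then $f:=\ind_{B^c}$ is nonzero and $P$-invariant, and $\sum_{k\geq 1}P^k f$ vanishes identically on $B$, contradicting the hypothesis.

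The only delicate point is the passage from the pointwise inequality $Pg\leq 2g$ to the set-level identity $P\ind_A=\ind_A$ in the forward direction; the monotone truncation together with the mass balance does the job. The remaining steps are routine manipulations with positivity, $P\ind=\ind$, $\int Pf=\int f$, and the lattice inequality $|Pf|\leq P|f|$.
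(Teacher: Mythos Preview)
Your argument is correct, and it differs from the paper's proof in an interesting way.

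For the forward direction the paper invokes the Chacon--Ornstein theorem: the averages $\tfrac{1}{n}\sum_{k=1}^{n}P^{k}f$ converge a.e.\ to a $P$-invariant function with the same integral as $f$, so if $\sum_{k\geq 1}P^{k}f$ vanished on a set of positive measure the limit would be zero there, hence zero everywhere by ergodicity, forcing $\int f=0$. Your route avoids any a.e.\ ergodic theorem: you manufacture directly an invariant characteristic function $\ind_A$ from the vanishing set of $g=\sum_{k\geq 1}2^{-k}P^{k}f$, using only positivity, $P\ind=\ind$, the integral identity, and a monotone truncation. This is more elementary and self-contained, at the cost of a slightly longer set-level argument (the passage from $Pg=0$ on $A$ to $P\ind_A=\ind_A$ via $\varphi_n=\min(ng,\ind)$ and the mass balance).

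For the converse both proofs build a nonzero nonnegative invariant function that vanishes on a set of positive measure. The paper does it in one stroke: if $f$ is a nonconstant invariant function then $g=(f\vee\int f\,d\mu)-\int f\,d\mu$ already has these properties, because $P(f\vee c)\geq Pf\vee c=f\vee c$ together with integral preservation gives invariance. You instead pass through invariant level sets, first showing that $(g-c)_{+}$ is invariant and then that $\ind_{\{g>c\}}$ is, via the truncations $\psi_n=\min(n(g-c)_{+},\ind)$. Your version yields the extra information that nontrivial invariant sets exist whenever ergodicity fails, which is a pleasant byproduct; the paper's version is shorter because it never needs to produce an invariant \emph{set}.

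Two minor remarks. First, when you write ``monotone convergence yields $P\ind_{A^{c}}=0$ on $A$'', it is worth noting that $P\varphi_n$ is increasing in $n$, so the $L^{1}$ limit and the pointwise limit agree a.e.; otherwise $L^{1}$ convergence alone would only give this along a subsequence. Second, in the converse you implicitly work with real-valued $g$; if the ambient space is complex one should first replace $g$ by its real or imaginary part.
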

\begin{proof}
Assume first that $P$ is ergodic. Let $f$ be a nonnegative function. By the Chacon-Ornstein theorem, the averages $\frac1n \sum_{k=1}^n P^k f$ converge almost everywhere to an invariant function. If $\sum_{k=1}^\infty P^kf=0$ on a set of positive measure then the limit function is zero on this set, hence by ergodicity it must be equal to zero almost everywhere. Since $f$ and the limit function have the same integral, also $f$ must be zero almost everywhere.

Conversely, assume that $f$ is a nonconstant invariant function for $P$. Then $f< \int fd\mu$ on a set of positive measure. For any functions $g$ and $h$ denote by $g\vee h$ the function being the pointwise maximum of $g$ and $h$. Since 
\[
P(f\vee \int f d\mu) \geq Pf\vee \int f d\mu = f\vee \int f d\mu
\]
and both functions have the same integrals, one has $P(f\vee \int f d\mu) = f\vee \int f d\mu$. The function $g=(f\vee \int f d\mu) - \int fd\mu$ is a positive invariant function, which is zero on a set of positive measure. But then $\sum_{k=1}^\infty P^kg$ is zero on the same set, which contradicts our assumption.\qed
\end{proof}

\begin{theorem}
The operator $\Ex{T}$ is ergodic if and only if the map $T$ is ergodic.
\end{theorem}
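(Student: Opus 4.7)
The plan is to prove both directions by reducing $\Ex{T}$-fixed points to a scalar functional equation for $T$. For the implication \emph{$\Ex{T}$ ergodic $\Rightarrow$ $T$ ergodic}, I argue by contraposition: if $T$ is not ergodic, pick an invariant set $A\subset X$ with $\mu(A)\in(0,1)$, let $g=\ind_A$, and define $\tilde g\in L^1(Y,\nu)$ by $\tilde g(x,k)=g(x)$. For $k\ge 2$ the identity $\Ex{T}\tilde g(x,k)=g(Tx)=g(x)$ is immediate. For $k=1$ I use that $T$-invariance of $g$ gives $g=g\circ T^j$ $\mu$-a.e.\ for every $j$, while the disintegration formula forces any $\mu$-null set to be $\mu_{\xi_k(x)}$-null for $\mu$-a.e.\ $x$; combined with the tautology that $T^k$ is constant on the atom $\xi_k(x)$, this shows $\int g\circ T\,d\mu_{\xi_k(x)}=g(T^kx)=g(x)$ for $\mu$-a.e.\ $x$, whence $\Ex{T}\tilde g(x,1)=g(x)\sum_k b_k=g(x)$. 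Thus $\tilde g$ is a non-constant $\Ex{T}$-fixed point.

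For the converse, assume $T$ is ergodic and $\Ex{T}f=f$ for some $f\in L^1(\nu)$. Since $\Ex{T}$ preserves real-valuedness and constants, I may assume $f$ is real with $\int f\,d\nu=0$, and I aim to show $f=0$. The $k\ge 2$ portion of the fixed-point equation reads $\sect{f}{k}(x)=\sect{f}{k-1}(Tx)$ and unfolds to $\sect{f}{k}(x)=g(T^{k-1}x)$ with $g:=\sect{f}{1}$; the identity $\int f\,d\nu=\int g\,d\mu$ then forces $\int g\,d\mu=0$. Substituting back into the $k=1$ equation and noting that $g\circ T^k$ equals the constant $g(T^kx)$ on $\xi_k(x)$, one obtains the functional equation
\[
g=\sum_{k=1}^\infty b_k\, g\circ T^k\quad\mu\text{-a.e.},
\]
that is, $Sg=g$, where $S:=\sum_k b_k U^k$ and $U$ is the Koopman operator of $T$.

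To close, the triangle inequality combined with double stochasticity of $S$ gives $|g|\le S|g|$ with equal integrals, hence $S|g|=|g|$ a.e.; combining this with $Sg=g$ through $g^\pm=\tfrac12(|g|\pm g)$ yields $Sg^+=g^+$ and $Sg^-=g^-$. On the set $\{g\le 0\}=\{g^+=0\}$ the non-negative sum $g^+=\sum_k b_k g^+\circ T^k$ with strictly positive weights forces every $g^+\circ T^k$ to vanish there, i.e.\ $T$ maps $\{g\le 0\}$ into itself; measure preservation upgrades this to genuine $T$-invariance, so ergodicity of $T$ gives $\mu(\{g\le 0\})\in\{0,1\}$, and either value combined with $\int g\,d\mu=0$ forces $g=0$ a.e. Consequently $f=0$ a.e. The main subtlety is the transfer between $\mu$-a.e.\ identities and identities valid $\mu_{\xi_k(x)}$-a.e.\ for $\mu$-a.e.\ $x$, needed in both the lifting step of the forward direction and the derivation of the scalar equation; modulo that technicality, the whole problem reduces to a clean classical statement about the fixed points of $S$.
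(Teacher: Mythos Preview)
Your proof is correct. The forward direction matches the paper's: both lift a $T$-invariant set $A$ to the $\Ex{T}$-invariant function $\ind_{A\times\N}$, though the paper simply arranges $T^{-1}A=A$ as an honest set equality so that $\xi_k(x)\subset A$ for $x\in A$, avoiding the $\mu$-a.e.\ versus $\mu_{\xi_k(x)}$-a.e.\ transfer you invoke.

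The converse, however, follows a genuinely different route. The paper first proves an operator-theoretic characterisation of ergodicity---$P$ is ergodic iff $\sum_{n\ge 1}P^nf>0$ a.e.\ for every nonnegative $f\not\equiv 0$---and then verifies this criterion for $\Ex{T}$ by a direct dynamical argument: one tracks how positivity on some level $X\times\{k\}$ propagates under iteration of $\Ex{T}$ down to level $1$ and then spreads over all of $X$ via $\bigcup_n T^{-n}A$ having full measure. Your argument instead stays with the fixed-point definition: you unfold $\Ex{T}f=f$ into the scalar equation $g=\sum_k b_k\,g\circ T^k$ for $g=\sect{f}{1}$, pass to $g^{\pm}$ by the standard $|g|\le S|g|$ trick, and read off $T$-invariance of $\{g\le 0\}$ from the strict positivity of the weights $b_k$. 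Your approach is self-contained (no auxiliary ergodicity criterion needed) and makes transparent that the obstruction to nonconstant fixed points is exactly the ergodicity of $T$; the paper's approach is more dynamical and yields the slightly stronger information that orbits of nonnegative functions under $\Ex{T}$ eventually become positive everywhere.
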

\begin{proof}
Assume that $T$ is not ergodic, so there is $A\subset X$ such that $0 < \mu(A) <1$ and $T^{-1}A=A$. Clearly, for $k>1$:
\[
\Ex{T}\ind_{A\times\N}(x,k) = \ind_{A\times\N}(Tx,k-1) = \ind_A(Tx) = \ind_{A\times\N}(x,k)
\]
Furthermore, if $x\in A$ then $\xi_k(x)\subset A$, so
\[
\Ex{T}\ind_{A\times\N}(x,1) = \sum_{k=1}^\infty b_k \mu_{\xi_k(x)}(T^{-1}A) = \sum_{k=1}^\infty b_k \mu_{\xi_k(x)}(A) = \sum_{k=1}^\infty b_k = 1.
\]
Therefore, $0\leq \Ex{T} \ind_{A\times\N} \leq \ind_{A\times\N}$. Both these functions have equal integrals, so they must be equal and $\ind_{A\times\N}$ is a non-constant function, which is invariant under the action of $\Ex{T}$. 

For the converse statement, let $T$ be ergodic and let $f$ be a non-negative function, which is not equal almost everywhere to zero. 
If $\sect f1$ is strictly positive on a set $A\subset X$ with $\mu(A)>0$ then, since $\sect f1\circ T$ is constant on $\xi_1(x)$, it holds that 
\[
\Ex{T}f(x,1) \geq b_1 \int \sect f1\circ T \,d\mu_{\xi_1(x)} = b_1 f(Tx,1) >0
\]
for $x\in T^{-1}A$. Inductively, $\Ex{T}^k f(x,1) \geq b_1\Ex{T}^{k-1} f (Tx,1) >0$ for $x\in T^{-k}A$. Ergodicity of $T$ implies that $\bigcup_{n=1}^\infty T^{-n} A$ has measure equal to one, so the sum $\sum_{n=1}^\infty \Ex{T}^nf(x,1)$ is positive $\mu$-almost everywhere. But then it also holds that 
\[
\sum_{n=1}^\infty\Ex{T}^nf(x,k)\geq\sum_{n=k-1}^\infty\Ex{T}^{n-k+1}f(T^{k-1}x,1)>0
\]
for almost every $x$ and every $k>1$.

If $f$ equals zero almost everywhere on $X\times\{1\}$ then $f(x,k)>0$ on a set $A\subset X$ of positive measure $\mu$ for some $k>1$. By definition, 
\[
\Ex{T}f(x,1) \geq b_k \int \sect fk \circ T\, d\mu_{\xi_k(x)}
\]
and the right hand side is positive on a set of positive measure $\mu$. Indeed, 
\[
0<\int \sect fk\,d\mu = \int \sect fk\circ T\, d\mu = \int_{X/_{\xi_k}} \int_C \sect fk\circ T\, d\mu_C d\hat\mu_k(C),
\]
so $\int_C \sect fk\circ T\, d\mu_C$ is positive on a set of positive measure $\hat\mu_k$. Hence, $\Ex{T}f(x,1)$ is positive on a set of positive measure $\mu$ and the hypothesis follows as before.\qed
\end{proof}


\section{Entropy}

For the sequence $(b_n)$ let us denote its $i$th partial sum by $S(i)=\sum_{k=1}^i b_k$ and its $i$th tail by $R(i)=\sum_{k=i+1}^\infty b_k$. We will prove that if $\sum_i R(i)$ converges then the entropy of $\Ex{T}$ is bounded from below by the entropy of $T$. This assumption is satisfied for example by geometric sequences (but not only for them). Let~$\norm{\cdot}_\infty$ denote the norm in $L^\infty(Y,\nu)$.
\begin{lemma}
If $R(i)$ is a summable sequence, then for every measurable set $A\subset X$ the sequence 
\[
\sup_{n\in\N} \norm{\left(\Ex{T}\right)^n\ind_{T^{-i}A\times\N} - \ind_{T^{-(i+n)}A\times\N}}_\infty
\]
converges to $0$, when $i$ goes to infinity.
\end{lemma}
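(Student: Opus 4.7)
The plan is to establish the uniform-in-$n$ estimate
\[
\|(\Ex{T})^n \ind_{T^{-i}A\times\N} - \ind_{T^{-(i+n)}A\times\N}\|_\infty \leq \sum_{j=i+1}^\infty R(j),
\]
which, since $R(i)$ is assumed summable, gives a right-hand side that vanishes as $i\to\infty$. The argument has two pieces: a one-step estimate $\|\Ex{T}\ind_{T^{-j}A\times\N} - \ind_{T^{-(j+1)}A\times\N}\|_\infty \leq R(j+1)$, and a telescoping that exploits the fact that $\Ex{T}$, being positive and preserving the constant~$\ind$, contracts $L^\infty(Y,\nu)$.

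For the one-step estimate, set $f_j = \ind_{T^{-j}A\times\N}$. On each sector $X\times\{k\}$ with $k\geq 2$, the definition of $\Ex{T}$ gives $\Ex{T}f_j(x,k) = \ind_{T^{-j}A}(Tx) = \ind_{T^{-(j+1)}A}(x) = f_{j+1}(x,k)$, so no error arises there. On $X\times\{1\}$ the formula from Definition \ref{def} yields
\[
\Ex{T}f_j(x,1) = \sum_{k=1}^\infty b_k \mu_{\xi_k(x)}(T^{-(j+1)}A).
\]
The crucial geometric observation is that $\xi_k$ refines $\xi_{j+1}$ whenever $k\leq j+1$ (since $T^k x=T^k x'$ forces $T^{j+1}x=T^{j+1}x'$), and $T^{-(j+1)}A$ is $\xi_{j+1}$-saturated, hence a union of atoms of $\xi_k$ for all such $k$. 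Therefore $\mu_{\xi_k(x)}(T^{-(j+1)}A)=\ind_{T^{-(j+1)}A}(x)$ for $k\leq j+1$, and the remaining tail $\sum_{k>j+1} b_k \mu_{\xi_k(x)}(T^{-(j+1)}A)$ lies in $[0,R(j+1)]$. Since $S(j+1)=1-R(j+1)$, a short case analysis on whether $x\in T^{-(j+1)}A$ yields pointwise $|\Ex{T}f_j(x,1) - f_{j+1}(x,1)| \leq R(j+1)$.

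Next, I would iterate via the telescoping identity
\[
(\Ex{T})^n f_i - f_{i+n} = \sum_{j=0}^{n-1} (\Ex{T})^{n-1-j}\bigl(\Ex{T} f_{i+j} - f_{i+j+1}\bigr).
\]
By $L^\infty$-contractivity of $\Ex{T}$, each summand has $L^\infty$ norm at most $R(i+j+1)$, and summing gives $\|(\Ex{T})^n f_i - f_{i+n}\|_\infty \leq \sum_{j=1}^n R(i+j) \leq \sum_{j=i+1}^\infty R(j)$, a bound independent of $n$.

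The main obstacle is really the careful treatment of the one-step estimate on the first sector $X\times\{1\}$: recognizing that $T^{-(j+1)}A$ is $\xi_k$-measurable for all $k\leq j+1$, which collapses the conditional measure $\mu_{\xi_k(x)}(T^{-(j+1)}A)$ to a genuine $\{0,1\}$-valued indicator and makes $S(j+1)$ show up as a scalar multiplying $\ind_{T^{-(j+1)}A}$. Once this is identified, the remainder of the proof is mechanical: telescoping plus $L^\infty$-contractivity do all the work, and the summability hypothesis on $R(i)$ enters only at the very last step, to ensure that the tail $\sum_{j=i+1}^\infty R(j)$ tends to zero with~$i$.
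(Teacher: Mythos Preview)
Your proof is correct and follows essentially the same approach as the paper's: a one-step $L^\infty$ estimate on the sector $X\times\{1\}$ via the $\xi_k$-saturation of $T^{-(j+1)}A$, combined with a telescoping (the paper phrases it as an induction) that uses the $L^\infty$-contractivity of $\Ex{T}$. Your one-step bound $R(j+1)$ is in fact slightly sharper than the paper's $R(j)$, since you correctly observe the saturation holds for all $k\leq j+1$ rather than just $k\leq j$, but this makes no difference for the conclusion.
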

\begin{proof}
Let $i$ be a positive integer. For $k\leq i$ the set $T^{-(i+1)1}A \cap \xi_k(x)$ is nonempty if and only if $T^{k}x\in T^{-(i-k+1)}A$, i.e., $x\in T^{-(i+1)}A$. 
In this case, if $y\in \xi_k(x)$ then $T^k y = T^kx\in T^{-(i-k+1)}A$, so $y\in T^{-(i+1)}A$. 
Therefore, $\xi_k(x) \subset T^{-(i+1)}A$, implying
\[
\mu_{\xi_k(x)}\left(T^{-(i+1)}A\right) = \ind_{T^{-(i+1)}A}(x) \qquad \textrm{for }k\leq i.
\]
Consequently,
\begin{eqnarray*}
S(i)\cdot\ind_{T^{-i-1}A}(x) &\leq& \Ex{T}\ind_{T^{-i}A\times\N} (x,1) \\
&=& \sum_{k=1}^\infty b_k \mu_{\xi_k(x)}(T^{-i-1}A) \leq S(i)\cdot\ind_{T^{-i-1}A}(x) + R(i),
\end{eqnarray*}
hence
\[
\abs{ \Ex{T}\ind_{T^{-i}A\times\N} (x,1) - \ind_{T^{-(i+1)}A\times\N} (x,1)} \leq R(i).
\]
Since $\Ex{T}\ind_{T^{-i}A\times\N}(x,k) = 
\ind_{T^{-(i+1)}A}(x)$ for $k\geq 2$, using the above inequality one gets
\[
\norm{\Ex{T}\ind_{T^{-i}A\times\N} - \ind_{T^{-(i+1)}A\times\N}}_\infty \leq R(i)
\]
for all $i$.

Assume inductively that 
\[
\norm{\Ex{T}^n\ind_{T^{-i}A\times\N} - \ind_{T^{-(i+n)}A\times\N}}_\infty \leq R(i)+...+R(i+n-1)
\]
for some $n$.  Since $\Ex{T}$ is a $L^\infty$ contraction, it holds that
\begin{multline*}
\norm{\Ex{T}^{n+1}\ind_{T^{-i}A\times\N} - \ind_{T^{-i-n-1}A\times\N}}_\infty \leq\\
	\leq \norm{\Ex{T}^{n+1}\ind_{T^{-i}A\times\N} - \Ex{T}\ind_{T^{-i-n}A\times\N}}_\infty + 
	\norm{\Ex{T}\ind_{T^{-i-n}A\times\N} - \ind_{T^{-i-n-1}A\times\N} }_\infty\\
	\leq R(i)+...+R(i+n-1) + R(i+n).
\end{multline*}
Hence, $\norm{\Ex{T}^n\ind_{T^{-i}A\times\N} - \ind_{T^{-i-n}A\times\N}}_\infty \leq \sum_{k=i}^\infty R(k)$ for every $n$, which ends the proof\qed
\end{proof}

The definition of entropy of a doubly stochastic operator is not widely known, so I will devote next few lines for a short introduction to the subject---a detailed exposition may be found in \cite{D} or \cite{DF} and an alternative approach in~\cite{FF}. Similarly to the classical case of the Kolmogorov-Sinai invariant, the entropy of a doubly stochastic operator on $L^1(Y,\nu)$ is defined in several steps. First, the entropy $H_\nu(\F)$ of a finite collection $\F$ of measurable functions with range contained in $[0,1]$ is defined (such collections replace partitions in operator-theoretic definition). Simultaneously, an operation of joining such collections is introduced, for instance, one can define the join of collections $\F$ and $\G$ as a concatenation of $\F$ and $\G$.  Then, the entropy $h_\nu(P,\F)$ of an operator $P$ with respect to a collection $\F$ is obtained as an upper limit (or a limit, if it exists) $\limsup_{n\to\infty} \frac1n H_\mu(\F^n)$, where $\F^n$ stands for the join of $\F,P\F,...,P^{n-1}\F$ and $P^k\F=\{P^kf:f\in\F\}$. Finally, $h_\nu(\F)$ is the supremum $\sup_\F h_\nu(P,\F)$ over all collections under consideration. It was proved in \cite{DF} that any specification of the joining operation
 and the `static' entropy~$H_\nu(\F)$, which satisfies certain set of axioms, leads to a common value of the final notion $h_\nu(P)$. In addition, the conditional entropy of a collection $\F$ with respect to $\G$ is defined as
	\[
	H_\nu(\F|\G) = H_\nu(\F\vee\G)-H_\nu(\G).
	\]

The explicit formula for~$H_\nu(\F)$ will not be needed in the current paper, but I will recall some of the properties of operator entropy, which I use in the forthcomming argument. Both the Kolmogorov-Sinai entropy and the operator entropy will be denoted by the same symbols $H_\nu$ and $h_\nu$. Moreover, the same symbol $\vee$ will be used for both the joining of partitions and the joining of collections of functions---in either case the meaning will be clear from the context. Below there is a list of some of the properties of entropy which can be found in \cite{DF}.
\begin{enumerate}
\renewcommand{\theenumi}{\roman{enumi}}
	\item Let $\xi$ and $\xi'$ be partitions of $Y$ and let $\ind_\xi=\{\ind_A:A\in\xi\}$, $\ind_{\xi'}=\{\ind_A:A\in\xi'\}$. Then $H_\nu(\xi)=H_\nu(\ind_\xi)$ and $H_\nu(\xi\vee\xi')=H_\nu(\ind_\xi \vee \ind_{\xi'})$.
	\item For any finite collections $\F$ and $\G$ it holds that $H_\nu(\F\vee\G) \leq H_\nu(\F)+H_\nu(\G)$. 
	\item For any finite collections $\F_1,...,\F_n$ and $\G_1,...,\G_n$ it holds that
	\[
	H_\nu\left(\bigvee_{i=1}^n \F_i|\bigvee_{i=1}^n \G_i\right) \leq \sum_{i=1}^n H_\nu(\F_i|\G_i)
	\]
	\item Entropy $H_\nu(\F)$ is continuous with respect to $\F$ in the following sense. For two collections $\F=\{f_1,...,f_r\}$ and 
$\G=\{g_1,...,g_r\}$ one defines their $L^1$-distance $\dist(\mathcal F, \mathcal G) $ by a formula
\[
\dist(\mathcal F, \mathcal G)= 
\min_\pi\left\{\max_{1\leq i\leq r}\int |f_i-g_{\pi(i)}|\ d\mu\right\},
\]
where the mi\-ni\-mum ranges over all per\-mu\-ta\-tions $\pi$ 
of a set $\{1,2,\dots r\}$. For every $\eps>0$ there is $\delta>0$ such that if $\F$ and $\G$
have cardinalities at most $r$ and $\dist(\mathcal F, \mathcal G)<\delta$ 
then $|H_\nu(\mathcal F|\mathcal G)|< \eps$.
\end{enumerate}

\begin{theorem}	\label{measure_entropy}
If $R(i)=\sum_{k=i+1}^\infty b_k$ is a summable sequence, then
\[
h_\mu(T) \leq h_\nu(\Ex{T})
\]
\end{theorem}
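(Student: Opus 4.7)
The plan is to transfer the Kolmogorov--Sinai entropy of $T$ to the operator entropy of $\Ex{T}$ by lifting partitions of $X$ to collections of indicator functions on $Y$ that are nearly stable under $\Ex{T}$, by virtue of the preceding lemma. Fix $\eps>0$ and a finite measurable partition $\xi=\{A_1,\dots,A_r\}$ of $X$ with $h_\mu(T,\xi)>h_\mu(T)-\eps$. Choose $\delta>0$ from the continuity property (iv) applied to collections of cardinality at most $r$, and fix $i$ large enough that $\sum_{l=i}^\infty R(l)<\delta$.

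Consider the collections
\[
\F=\{\ind_{T^{-i}A_j\times\N}\}_{j=1}^r,\qquad \G_k=\{\ind_{T^{-(i+k)}A_j\times\N}\}_{j=1}^r.
\]
By the preceding lemma,
\[
\norm{\Ex{T}^k\ind_{T^{-i}A_j\times\N}-\ind_{T^{-(i+k)}A_j\times\N}}_\infty \leq \sum_{l=i}^\infty R(l)<\delta
\]
for every $k\geq 0$ and every $j$, so the obvious pairing yields $\dist(\G_k,\Ex{T}^k\F)<\delta$ (using that $L^1$-norm is dominated by $L^\infty$-norm on the probability space $(Y,\nu)$), and hence $H_\nu(\G_k\,|\,\Ex{T}^k\F)<\eps$ for all $k$.

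Combining property (iii) with monotonicity of entropy under join,
\[
H_\nu\Bigl(\bigvee_{k=0}^{n-1}\G_k\Bigr) \leq H_\nu\Bigl(\bigvee_{k=0}^{n-1}\G_k \vee \bigvee_{k=0}^{n-1}\Ex{T}^k\F\Bigr) \leq H_\nu\Bigl(\bigvee_{k=0}^{n-1}\Ex{T}^k\F\Bigr) + n\eps.
\]
By property (i) and the product structure $\nu=\mu\times m$, the left-hand side equals the Kolmogorov--Sinai entropy $H_\mu(\bigvee_{k=0}^{n-1} T^{-(i+k)}\xi)$, which by $T$-invariance of $\mu$ equals $H_\mu(\bigvee_{k=0}^{n-1} T^{-k}\xi)$. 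Dividing by $n$, passing to the limit on the left and to $\limsup$ on the right yields $h_\mu(T,\xi)\leq h_\nu(\Ex{T},\F)+\eps\leq h_\nu(\Ex{T})+\eps$; letting $\eps\to 0$ and taking the supremum over $\xi$ gives $h_\mu(T)\leq h_\nu(\Ex{T})$.

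The key point, and the reason the summability hypothesis enters, is that the error $H_\nu(\G_k\,|\,\Ex{T}^k\F)$ must stay below $\eps$ \emph{uniformly} in $k$: this is exactly what the uniform-in-$n$ bound from the lemma provides. Without the uniform bound, the cumulative error on the right of the displayed inequality would grow faster than $n\eps$ and fail to be absorbed when normalizing by $n$.
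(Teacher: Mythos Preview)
Your proof is correct and follows essentially the same route as the paper's: both lift $\xi$ to the collection $\F=\{\ind_{T^{-i}A_j\times\N}\}$, use the preceding lemma to make $\Ex{T}^k\F$ uniformly close (in $k$) to the indicator collections $\G_k=(T\times\Id)^k\F$, and then apply properties (iii)--(iv) to bound $H_\nu(\bigvee_k\G_k)$ by $H_\nu(\bigvee_k\Ex{T}^k\F)+n\eps$. The only cosmetic differences are that you select $\xi$ with $h_\mu(T,\xi)>h_\mu(T)-\eps$ at the outset while the paper takes the supremum over $\xi$ at the end, and you name the collections $\G_k$ explicitly rather than writing them as $(T\times\Id)^n\ind_{T^{-i}\xi\times\N}$.
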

\begin{proof}
Fix a partition $\xi$ of $X$ and $\epsilon>0$. Denote by $\Id:X\to X$ the identity map on $X$. Given $i\in\N$ one calculates: 
\begin{eqnarray*}
	H_\mu\left(\bigvee_{n=0}^N T^{-i-n}\xi\right) &=& H_\mu\left(\bigvee_{n=0}^N T^n\ind_{T^{-i}\xi}\right) \\
	 & = & H_\nu\left(\bigvee_{n=0}^N (T\times \Id)^n \ind_{T^{-i}\xi\times \N}\right) \\
		&\leq& H_\nu\left(\bigvee_{n=0}^N \Ex{T}^n\ind_{T^{-i}\xi\times \N}\right) +\\
		&& + H_\nu\left(\bigvee_{n=0}^N (T\times \Id)^n \ind_{T^{-i}\xi\times \N} \Big| \bigvee_{n=0}^N \Ex{T}^n\ind_{T^{-i}\xi\times \N}\right)\\
		&\leq& H_\nu\left(\bigvee_{n=0}^N \Ex{T}^n\ind_{T^{-i}\xi\times \N}\right) +\\
		&& + \sum_{n=0}^N H_\nu\left((T\times \Id)^n \ind_{T^{-i}\xi\times \N} \Big| \Ex{T}^n\ind_{T^{-i}\xi\times \N}\right).
\end{eqnarray*}
By the preceding lemma and the continuity of entropy with respect to a collection of functions, the expression under the sum is smaller than $\epsilon$ if only $i$ is big enough. Moreover, for every $i$ one has $h_\mu(T,\xi) = h_\mu(T,T^{-i}\xi)$. Therefore,
\[
h_\mu(T,\xi) \leq h_\nu(\Ex{T},\ind_{T^{-i}\xi\times \N}) + \epsilon \leq h_\nu(\Ex{T}) + \epsilon
\]
and the inequality follows by taking supremum over $\xi$ and infimum over $\epsilon$.\qed
\end{proof}

\begin{final_remark}${}$
\begin{enumerate}
	\item It seems very unlikely that the entropy of $\Ex{T}$ could ever be strictly higher than the entropy of $T$, but, surprisingly, I was not able to prove the equality. However, my conjecture is that the equality holds at least if $T$ is a Bernoulli shift.
	\item Let $X$ be a compact or, more generally, Polish space. An operator $P:C(X)\to C(X)$ is Markov operator if it is positive and preserves constants (in non-compact case $C(X)$ is understood as the space of bounded continuous functions). For a continuous map $T$ our definition~\ref{def} gives a Markov operator on $C(Y)$ if for every continuous $g\in C(X)$ the map $x\mapsto \int g d\mu_{\xi_k(x)}$ is everywhere defined and continuous, i.e., if $x\mapsto \mu_{\xi_k(x)}$ is continuous in the weak${}^*$ topology. It seems reasonable to ask how restrictive are these demands. In \cite{T} one finds an interesting non-classical approach to the idea of disintegration, which yields the same result as the usual disintegration, if well-defined. In particular, theorem 7.1 there gives (together with preceding definitions and construction) a set of assumptions guaranteeing that our definition of $\Ex{T}$ is possible in topological setup. It states that if $X$ and $Z$ are both locally compact and $\sigma$-compact Hausdorff spaces with Radon measures $\mu$ and $\nu$, respectively, $t:X\to Z$ is a continuous map and $Z$ is the support of $\nu$, then the disintegration~$\mu_z$ of $\mu$ along $t$ is defined for all $z\in Z$ and the map $z\to \mu_{z}$ is continuous. In our case, for a given $k$ we consider a partition of $X$ into closed sets $C_{x,k}=T^{-k}T^{k}(x)$ and the role of $Z$ is played by the quotient space $X/_{\xi_k}$. By the definition of identification topology in $X/_{\xi_k}$, this space is a $T_1$-space and a canonical surjection $x\to C_{x,k}$ is continuous (e.g., see \cite{Du}). If this map was open, then compactness of $X$ would imply that $X/_{\xi_k}$ is a Hausdorff space and compactness of  $X/_{\xi_k}$ would follow easily. It is indeed open if $X$ is a subshift---it is easy to see that the image of a cylinder under the identification map is a set in $X/_{\xi_k}$, such that the union of its elements (treated as subsets of $X$) is also a cylinder. Any measure on $X$ with full support transports to a measure with full support on $X/_{\xi_k}$. So the definition~\ref{def} makes sense in topological setup at least in the class of all subshifts having invariant measure with full support. However, to study entropy of this operator one either needs to extend entropy theory introduced in \cite{DF} beyond compact spaces or to define the operator on some compactification of $Y$.
\end{enumerate}
\end{final_remark}


\end{document}